\newtheorem{theorem}{Theorem}
\newtheorem{corollary}{Corollary}[theorem]
\newtheorem{proposition}{Proposition}
\newtheorem{definition}{Definition}
\newtheorem*{definition*}{Definition}
\newtheorem*{example*}{Example}
\newtheorem*{conditional*}{Conditional}
\newtheorem*{remark*}{Remark}
\tikzset{Myarrow/.style={very thin, arrows=-Latex}}
\definecolor{Gray}{gray}{0.85}
\definecolor{LightCyan}{rgb}{0.88,1,1}
\newcolumntype{a}{>{\columncolor{Gray}}c}
\newcolumntype{b}{>{\columncolor{Gray}}r}
\newcolumntype{d}{>{\columncolor{white}}c}
\newcommand{\scrL}{{\mathscr L}}
\providecommand{\keywords}[1]
{
  \small	
  \textbf{\textit{Keywords---}} #1
}
\title{
On Feller, Pollard and the Complete Monotonicity 
of the Mittag-Leffler Function $E_\alpha(-x)$
}
\author{Nomvelo Karabo Sibisi \\{\small {\tt sbsnom005@myuct.ac.za}}}
\date{\today}
\begin{document}
\maketitle
\thispagestyle{empty}


\begin{abstract}
\noindent
Pollard used contour integration 
to  show that  the Mittag-Leffler function is the Laplace transform of a positive function,
thereby proving that 
it is completely monotone.
He  also cited personal communication by Feller of a 
discovery of the result by  ``methods of probability theory''.
In his published work, Feller 
used the two-dimensional Laplace transform of a  bivariate distribution to derive the Pollard result.
But both approaches may be described as  analytic,
despite the occurrence  of the  stable distribution in Feller's starting point and in the Pollard result itself.
We adopt  a Bayesian probabilistic approach that assigns  a prior distribution to the scale parameter of the stable distribution.
We present  Feller's method as a particular  instance of such assignment.
The  Bayesian framework  enables generalisation of 
the Pollard result.
This leads to a novel integral representation of the Mittag-Leffler function as well as a
variant   arising from polynomial tilting of the stable density.
\end{abstract}
\keywords{
Bayesian reasoning; complete monotonicity; 
stable, gamma distributions; Mittag-Leffler function, distribution;  
 infinite divisibility.}

\section{Background}
\label{sec:intro}

An infinitely differentiable function $\varphi(x)$ on $x>0$ is completely monotone  if its derivatives $\varphi^{(n)}(x)$ 
satisfy $(-1)^n\varphi^{(n)}(x)\ge0$, $n\ge 0$.
Bernstein's theorem 
states that $\varphi(x)$ is completely monotone iff it may be expressed as 
\begin{align}
\varphi(x) &= \int_0^\infty e^{-x t}\,dF(t) 
= \int_0^\infty e^{-x t} f(t)dt
\label{eq:LT} 
\end{align}
for  a non-decreasing distribution function $F(t)$ with density $f(t)$, {\it i.e.}\ $F(t)=\int_0^t f(u)du$.
The first integral in~(\ref{eq:LT}) is formally called the Laplace-Stieltjes transform of $F$ and  
the latter the (ordinary) Laplace transform  of  $f$.
For bounded $F(t)$, $\varphi(x)$ is defined on $x\ge0$.
Integrating~(\ref{eq:LT}) by parts in this case gives  $\varphi(x)$ in terms of 
the ordinary Laplace transform of $F$:
\begin{align}
   \varphi(x) &=  x \int_{0}^\infty e^{-xt} F(t)\,dt
\end{align}
The Mittag-Leffler function 
$E_\alpha(x)$  is defined by  the infinite series 
\begin{align}
E_\alpha(x) &= \sum_{k=0}^\infty \frac{x^k}{\Gamma(\alpha k+1)} \quad \alpha\ge0
\label{eq:ML}
\end{align}
For later reference, the Laplace transform of 
$E_\alpha(-\lambda x^\alpha)$ $(\lambda>0)$ is 
\begin{align}
\int_0^\infty e^{-sx} E_\alpha(-\lambda x^\alpha) \,dx  &= \frac{s^{\alpha-1}}{\lambda+s^\alpha} \qquad {\rm Re}(s)\ge0
\label{eq:LaplaceML}
\end{align}
Pollard and Feller discussed the complete monotonicity of $E_\alpha(-x)$ from different perspectives.
We summarise both  
before presenting a Bayesian argument.

\subsection{Pollard's Approach}
\label{sec:Pollard}

In a 1948 paper, Pollard~\cite{PollardML}  led with the  opening remark:
\begin{quote}
``W.~Feller communicated to me his discovery -- by the methods of probability theory -- that if $0\le \alpha \le1$ 
the function $E_\alpha(-x)$ is completely monotonic for $x\ge0$. 
This means that it can be written in the form
\begin{align*}
E_\alpha(-x) &= \int_{0}^\infty e^{-xt} dP_\alpha(t) 
\end{align*}
where $P_\alpha(t)$ is nondecreasing and bounded.
In this note we shall prove this fact directly  and determine the function $P_\alpha(t)$ explicitly.'' \newline
 [we use $P_\alpha$ where Pollard used $F_\alpha$, which we reserve for another purpose]
\end{quote}
Having dispensed with  $E_0(-x)=1/(1+x)$ and $E_1(-x)=e^{-x}$ since ``there is nothing to be proved in these cases'',
Pollard used  a contour integral representation of $E_\alpha(-x)$:
\begin{align}
 E_\alpha(-x) &= \frac{1}{2\pi i}\oint_{C} \frac{s^{\alpha-1}e^s}{x+s^{\alpha}}\,ds  
                       = \frac{1}{2\pi i\alpha}\oint_{C^\prime} \frac{e^{z^{\frac{1}{\alpha}}}}{x+z}\,dz
\label{eq:MLcontour}
\end{align}
to prove that 
\begin{alignat}{3}
p_\alpha(t) &\equiv P_\alpha^{\,\prime}(t) = \frac{1}{\alpha}\, f_\alpha(t^{-1/\alpha})\, t^{-1-1/\alpha}  \qquad && 0<\alpha<1
\label{eq:Pollarddensity}
\intertext{where $f_\alpha(t)$ is defined by} 
e^{-s^\alpha}  &= \int_0^\infty e^{-s t} f_\alpha(t)\,dt  && 0<\alpha<1 
\label{eq:stable}
\end{alignat}
Pollard~\cite{Pollard} had earlier proved that $f_\alpha(t)>0$, so that
$p_\alpha(t)\ge0$, thereby completing his proof that  $E_\alpha(-x)$ is completely monotone for $0\le \alpha \le1$.
Pollard stopped at the point of deriving~(\ref{eq:Pollarddensity}), the density  $p_\alpha(t)\equiv P_\alpha^{\,\prime}(t)$.
As per initial task, we proceed to discuss $P_\alpha(t)$ explicitly.
We first recognise $f_\alpha(t)$ as  the density  of the stable distribution $F_\alpha$ on $[0,\infty)$
\begin{alignat}{3}
F_\alpha(t) &= \int_0^t f_\alpha(u)\,du  \qquad && 0<\alpha<1 
\label{eq:stableF}
\end{alignat}
with normalisation $F_\alpha(\infty)=1$.
In turn, $P_\alpha$ is the distribution
\begin{align}
P_\alpha(t) &= \int_0^t p_\alpha(u)\,du  = \frac{1}{\alpha} \int_0^t  f_\alpha(u^{-1/\alpha})\, u^{-1-1/\alpha} \, du \nonumber 
\intertext{Setting $y=u^{-1/\alpha}$ gives a simple relation between $P_\alpha$ and $F_\alpha$:}
P_\alpha(t)  &= \int_{t^{-1/\alpha}}^\infty  f_\alpha(y)\, dy 
 =  1- \int_0^{t^{-1/\alpha}} f_\alpha(y)\, dy   
  \equiv 1-  F_\alpha(t^{-1/\alpha}) 
\label{eq:PollardP}
\end{align}
Setting aside Pollard's contour integral proof, it is not clear how  to show directly that 
\begin{align}
E_\alpha(-x) 
= \int_{0}^\infty e^{-xt} dP_\alpha(t)  
&= \int_{0}^\infty e^{-xt} \, d(1-  F_\alpha(t^{-1/\alpha})) \nonumber \\
   &= x \int_{0}^\infty e^{-xt} (1-  F_\alpha(t^{-1/\alpha}) )\,dt
\label{eq:LaplacePollardP}
\end{align}
Feller followed a different route, discussed next.  

\subsection{Feller's Approach}
\label{sec:Feller}

In an illustration of the use of the two-dimensional Laplace transform, 
Feller~\cite{Feller2}(p453) considered  $1-F_\alpha(xt^{-1/\alpha})$,
a bivariate generalisation of~(\ref{eq:PollardP}) over $x>0,t>0$.
The Laplace transform over $x$, followed by that over  $t$ gives
\begin{align}
\int_{0}^\infty e^{-sx} (1-F_\alpha(xt^{-1/\alpha}))\,dx &= \frac{1}{s}-\frac{e^{-ts^\alpha}}{s}
\label{eq:FellerLaplace1} \\
\frac{1}{s}  \int_{0}^\infty e^{-\lambda t} \left(1-e^{-ts^\alpha}\right)dt &= \frac{1}{\lambda} \frac{s^{\alpha-1}}{\lambda+s^\alpha}
 \label{eq:FellerLaplace2}
\end{align}
By reference to~(\ref{eq:LaplaceML}), the right hand side of~(\ref{eq:FellerLaplace2}) is the 
 Laplace transform of $E_\alpha(-\lambda x^\alpha)/\lambda$.
Since the two-dimensional Laplace transform equivalently can  be evaluated first over $t$ then over $x$,
Feller concluded that 
\begin{align}
E_\alpha(-\lambda x^\alpha) &= \lambda \int_{0}^\infty e^{-\lambda t} (1-F_\alpha(xt^{-1/\alpha}))\,dt 
\label{eq:FellerPollard1} \\
\implies E_\alpha(-x) &= \int_{0}^\infty e^{-t} (1-F_\alpha(x^{1/\alpha}t^{-1/\alpha}))\,dt \\
(t\to xt)\qquad     &= x \int_{0}^\infty e^{-xt} (1-F_\alpha(t^{-1/\alpha}))\,dt
\label{eq:FellerPollard2}
\end{align}
which is the Pollard result in the form~(\ref{eq:LaplacePollardP}).

Feller's proof  is based on the interchange of the order of integration (Fubini's theorem) and the 
uniqueness of Laplace transforms.
It can  be represented by the commutative diagram below, where $\scrL_{s\vert t}$ denotes
the one-dimensional Laplace transform of a bivariate source function at fixed $t$, 
to give 
a  bivariate function of $(s,t)$ where $s$ is the Laplace variable.
\begin{equation}
\begin{tikzpicture}[auto,scale=1.8, baseline=(current  bounding  box.center)]
\newcommand*{\size}{\scriptsize}%
\newcommand*{\gap}{.2ex}%
\newcommand*{\width}{3.0}%
\newcommand*{\height}{1.5}%

\node (P) at (0,0)  {$1-F_\alpha(xt^{-1/\alpha})$};
\node (Q) at ($(P)+(\width,0)$) {$\dfrac{1}{s}-\dfrac{e^{-ts^\alpha}}{s}$};
\node (B) at ($(P)-(0,\height)$) {$\dfrac{1}{\lambda}E_\alpha(-\lambda x^\alpha)$}; 
\node (C) at ($(B)+(\width,0)$) {$\dfrac{1}{\lambda}\dfrac{s^{\alpha-1}}{\lambda+s^\alpha}$};   
\draw[Myarrow] ([yshift =  \gap]P.east)  --  node[above] {\size $\scrL_{s\vert t}$} node[below]{\size easy}  ([yshift = \gap]Q.west) ;
\draw[Myarrow]([xshift  =  \gap]Q.south) --  node[left] {\size $\scrL_{\lambda\vert s}$} node[right] {\size easy} ([xshift =  \gap]C.north);
\draw[Myarrow] ([xshift =  \gap]P.south) -- node[left] {\size $\scrL_{\lambda\vert x}$}  node[right] {\size hard}
([xshift =  \gap]B.north);
\draw[Myarrow] ([yshift = +\gap]C.west) --  node[above] {\size $\scrL^{-1}_{x\vert \lambda}$} node[below]{\size easy}  ([yshift  = +\gap]B.east); 
\end{tikzpicture}
\label{eq:Fellerdiagram}
\end{equation}
The desired  proof  is the   ``hard'' direct path, which is equivalent to the  ``easy'' indirect path.
We will return to commutative diagram representation in a different context later in the paper.

Feller's concise proof uses  ``methods of probability theory'', as cited  by Pollard, only to the extent 
of choosing the  bivariate distribution as input to the   two-dimensional  Laplace transform.
Other than that, the methods by both  Pollard and Feller might  be described as analytic rather than probabilistic.
This naturally  begs the following  questions:
\begin{enumerate}
\item What is it  that amounts to a  method of probability theory, at least in the context of proving that 
$E_\alpha(-x)$ is completely monotone?
\item What additional or complementary  insight, if any, does probabilistic reasoning offer relative to an analytic perspective?
\end{enumerate}

\subsection{Purpose of Paper}
\label{sec:purpose}
This paper addresses both questions above.
The  approach is  that of strict use of  the  sum and product rules of probability theory.
We identify this as  Bayesian reasoning, although our context is not one of  Bayesian inference.
The latter  calls for explicit use of Bayes' rule to transition from prior to posterior distribution,
with the aid of a prescribed likelihood.
The assignment of appropriate  distribution in our context is guided by the  
 task of proving  that $E_\alpha(-x)$ is completely monotone.
We  first cast Feller's argument in such terms before proceeding to a more general discussion.


\subsection{Scope of Paper}
\label{sec:scope}

The Mittag-Leffler function is of growing interest in probability theory and physics, 
with a diversity of applications, notably fractional calculus.
A comprehensive study of the  properties and applications of  the Mittag-Leffler function 
and its numerous generalisations is beyond the scope of this paper.
We  consciously restrict the scope  to  the theme of  complete monotonicity and  Mittag-Leffler functions, 
underpinned by 
Bayesian reasoning.

Other studies that explicitly discuss complete monotonicity and  Mittag-Leffler functions 
build upon  complex analytic approaches similar to Pollard's rather than
the probabilistic  underpinning discussed here.
For example, deOliveira {\it et al.}~\cite{Oliveira}  and Mainardi and Garrappa~\cite{MainardiGarrappa} studied the
complete monotonicity of $x^{\beta-1}E_{\alpha,\beta}^\gamma(-x^\alpha)$, whereas 
G\'{o}rska {\it et al.}~\cite{Gorska} explored  the complete monotonicity of $E_{\alpha,\beta}^\gamma(-x)$. 
$E_{\alpha,\beta}^\gamma(x)$ is the three-parameter variant of the Mittag-Leffler function, 
also  known as the Prabhakar function.
These papers comment on the fundamental  importance of the complete monotonicity  of Mittag-Leffler functions used in 
the modelling of physical phenomena, such as anomalous dielectric relaxation and viscoelasticity.



Finally, we are keenly aware that there are other views on the interpretation of ``methods of probability theory''.  
We comment on this before discussing the Bayesian approach in detail.

\subsection{Probabilistic Perspectives}
\label{sec:perspectives}

The phrase  `methods of probability theory' used by Pollard may suggest an 
 experiment  with random outcomes as a  fundamental  metaphor. 
Indeed, Pollard's $P_\alpha$, which is referred to as  the Mittag-Leffler distribution in the probabilistic literature,
 is derived  as a limiting distribution of 
a  P{\' o}lya urn scheme  ({\it e.g.}\ Janson~\cite{Janson}).

Diversity  of approach is commonplace in  probability theory and mathematics more generally.
For example, in a  context of nonparametric  Bayesian analysis, 
 Ferguson~\cite{Ferguson1} constructed the Dirichlet process based on 
the  gamma distribution as the fundamental probabilistic concept, 
without invoking a random experiment.
Blackwell and MacQueen~\cite{BlackwellMacQueen}  observed that the Ferguson approach
``involves a rather deep study of the gamma process'' as they proceeded to give  an alternate construction 
based on the metaphor of a  generalised P{\' o}lya urn scheme.
Adopting the one approach is not to deny or diminish the other,  but to bring attention to the diversity of thinking in  probability theory,
 even when the end result is the same mathematical  object.
We look upon this as healthy complementarity rather than undesirable contestation.
 

We   discuss  complete  monotonicity by methods of probability theory in the sense of Bayesian reasoning.
For the purpose at hand, we have no need to invoke an underlying random experiment or indeed  an explicit random variable,
while not denying the latter  as an alternative probabilistic approach.
Hence, for example, we  shall continue to express the Laplace transform of a distribution as an explicit integral 
rather than as an expectation $\mathbb{E}\left[e^{-sX}\right]$ 
for  a random variable $X$.

\section{A Bayesian Approach}
\label{sec:Bayesian}

First, we note that the scale change $s\to t^{1/\alpha}s$ $(t>0)$ in~(\ref{eq:stable})  gives
\begin{align}
 e^{-t s^\alpha}  &= \int_0^\infty e^{-s x} f_\alpha(x\,t^{-1/\alpha}) t^{-1/\alpha}\,dx 
 \equiv \int_0^\infty e^{-s x} f_\alpha(x \vert t) \,dx
\label{eq:stablescaled}
\end{align}
where $f_\alpha(x\vert t)\equiv f_\alpha(x\,t^{-1/\alpha}) t^{-1/\alpha}$ is 
the stable density conditioned on the scale parameter $t$,
with $f_\alpha(x) \equiv f_\alpha(x\vert 1)$.
Correspondingly, the  stable distribution conditioned  on $t$ is
\begin{align}
F_\alpha(x\vert t) &= \int_0^x\, f_\alpha(u\vert t)\,du
     =  \int_0^{x t^{-1/\alpha}}  f_\alpha(u) \,du  \equiv F_\alpha(x t^{-1/\alpha})
\label{eq:stablescaleddistribution}
\end{align}
with Laplace transform 
$ e^{-t s^\alpha}/s$.

We then assign a distribution $G(t)$  to the scale parameter $t$ of 
$F_\alpha(x\vert t)$.
Then,  by the sum and  product rules of probability theory, the  unconditional  or marginal distribution  $M_\alpha(x)$  over $x$ is
\begin{align}
M_\alpha(x) &= \int_0^\infty F_\alpha(x\vert t) dG(t)
\label{eq:marginaldistribution}
\intertext{with Laplace transform}
\int_0^\infty e^{-s x} M_\alpha(x) \,dx  
       &= \frac{1}{s} \int_0^\infty e^{-ts^\alpha}\, dG(t)
\label{eq:marginaldistributionLaplace}
\end{align}
$M_\alpha$ 
is also referred to as  a  mixture distribution, 
arising from randomising or mixing the parameter $t$ in $F_\alpha(x\vert t)$  with $G(t)$.
This has the same import as saying that we assign a prior distribution  $G(t)$ on $t$ 
and we shall continue to use the latter language.

 $G$ may depend on one or more parameters. 
A notable example is the gamma distribution $G(\mu,\lambda)$ with shape and scale parameters  $\mu>0, \lambda>0$ respectively:
\begin{align}
dG(t\vert \mu,\lambda) 
 &=\dfrac{\lambda^\mu}{\Gamma(\mu)}\,t^{\mu-1}e^{-\lambda t}\, dt 
\label{eq:gammadistribution} 
\end{align}
$\lambda$ is not fundamental and may  be set to  $\lambda=1$ by  change of scale 
$t\to\lambda t$, while $\mu$ controls the shape of $G(t\vert \mu,\lambda)$.
The marginal~(\ref{eq:marginaldistribution})  becomes $M_{\alpha}(x\vert \mu,\lambda)$, with Laplace transform
\begin{align}
\int_0^\infty e^{-s x} M_\alpha(x\vert \mu,\lambda) \,dx &=  \frac{1}{s}\left(\frac{\lambda}{\lambda+ s^\alpha}\right)^\mu
       = \frac{1}{s}\left(1-\frac{s^\alpha}{\lambda+ s^\alpha}\right)^\mu
\label{eq:marginaldistributionLaplace}
\end{align}
We may now state Feller's approach  from a Bayesian  perspective.

\subsection{A Bayesian View of Feller's Approach}
\label{sec:BayesianFeller}

The case $\mu=1$ in~(\ref{eq:gammadistribution}) gives the exponential distribution $dG(t\vert \lambda) =  \lambda e^{-\lambda t}dt$.
Then $M_\alpha(x\vert \lambda)\equiv M_\alpha(x\vert \mu=1,\lambda)$ is 
\begin{align}
M_\alpha(x\vert \lambda) &= \int_0^\infty F_\alpha(x\vert t) dG(t\vert \lambda)
    = \lambda \int_0^\infty F_\alpha(x\vert t) e^{-\lambda t}\,dt
\label{eq:expprior} 
\end{align}
The Laplace transform of $M_\alpha(x\vert \lambda)$, read from~(\ref{eq:marginaldistributionLaplace}) with $\mu=1$, is
\begin{alignat}{3}
\int_0^\infty e^{-s x} &M_\alpha(x\vert \lambda) \,dx 
&&= \frac{1}{s}- \frac{s^{\alpha-1}}{\lambda+s^\alpha}
\label{eq:LaplaceMarginaldistribution} \\
\implies 
&\quad M_\alpha(x\vert \lambda) &&= 1-E_\alpha(-\lambda x^\alpha) \\
\implies 
&E_\alpha(-\lambda x^\alpha) &&= 1-M_\alpha(x\vert \lambda)
  = \lambda \int_0^\infty (1-F_\alpha(x\vert t)) e^{-\lambda t} \,dt
\end{alignat}
This reproduces Feller's result~(\ref{eq:FellerPollard1}) 
from a Bayesian perspective.
The difference is purely a matter  of conceptual outlook: 
\begin{description}[leftmargin=3.2em,font=\rm\sffamily]
\item[Feller:] Study  the  two-dimensional Laplace transform of the bivariate distribution $1-F_\alpha(x t^{-1/\alpha})$, 
where $F_\alpha$ is the stable distribution. 
Deduce that $E_\alpha(-\lambda x^\alpha)/\lambda$ is the Laplace  transform of $1-F_\alpha(x t^{-1/\alpha})$ over $t$ at fixed $x$,
where $\lambda$ is the Laplace variable.
\item[Bayes:] Assign an exponential prior distribution $G(t\vert 1,\lambda)$ to the scale factor $t$ of 
$F_\alpha(x\vert t)\equiv F_\alpha(x t^{-1/\alpha})$, where $G(t\vert\mu,\lambda)$  is the gamma distribution.
Marginalise 
over $t$ to generate the Feller result directly.
\end{description}
Feller himself might also have established the result by the latter reasoning. 
Under subordination of processes~\cite{Feller2}(p451),
he  discussed   mixture distributions but he did not  specifically discuss the Mittag-Leffer function in this context in his published work.
 The task fell on Pillai~\cite{Pillai} 
 to study $M_\alpha(x\vert\mu)\equiv M_\alpha(x\vert \mu, \lambda=1)$,
 including its infinite divisibility and the corresponding  Mittag-Leffer  stochastic process. 
 He also proved that  $M_\alpha(x\vert 1)=1-E_\alpha(-x^\alpha)$ (as  discussed  above), 
which he referred to as the Mittag-Leffer distribution.
There are thus two distributions bearing the  name ``Mittag-Leffer distribution'': 
$M_\alpha(x)=1-E_\alpha(-x^\alpha)$ and $P_\alpha(t) =1-  F_\alpha(t^{-1/\alpha})$.
We shall use the term to refer to the latter distribution  in the balance of our discussion.

\subsection{A Bayesian Generalisation}
\label{general}

The natural question arising from the Bayesian approach is whether there might be other choices of $\mu$ 
in $G(\mu,\lambda)$ (or indeed other choices of $G$ altogether) 
that yield the Pollard result and, if so,  what   insight they might  offer.
At face value, there would appear to be nothing further to be said since other choices of $\mu$ 
can be expected to lead to different results,  beyond  the study of the Mittag-Leffler function. 

While~(\ref{eq:gammadistribution}) is not defined for $\mu=0$, we note 
that (with $\mu\Gamma(\mu)=\Gamma(\mu+1)$):
\begin{align}
\frac{1}{\mu}dG(t\vert \mu,\lambda)
 &=\dfrac{\lambda^\mu}{\Gamma(\mu+1)}\,t^{\mu-1}e^{-\lambda t}\, dt  \\
 \implies
\lim_{\mu\to0} \frac{1}{\mu}dG(t\vert \mu,\lambda)
 &= t^{-1}e^{-\lambda t}\, dt 
\end{align}
Against our own expectation, we have discovered that  this ``$\mu=0$'' case 
({\it i.e.}\ the unnormalised distribution with density $t^{-1}e^{-\lambda t}$) 
generates  a novel integral representation of the Mittag-Leffler function.
This is the main result of this paper, which we state next.
We follow with a discussion of the  Bayesian reasoning that led to the discovery and the generalisation that  arises from that.

\section{Main Contribution}
\label{sec:contribution}

\begin{proposition}
The Mittag-Leffler function $E_\alpha(-\lambda x^\alpha)$ $(x\ge0, \lambda>0)$ has the integral representation 
\begin{align}
\alpha\,E_\alpha(-\lambda x^\alpha) &= x \int_0^\infty f_\alpha(x\vert t) \, t^{-1} e^{-\lambda t} \,dt  \qquad 0<\alpha<1
\label{eq:ML_intrep0}
\intertext{where $f_\alpha(x\vert t)$ is the stable density with Laplace  transform $e^{-t s^\alpha}$.
This leads to the Pollard result}
E_\alpha(-x) &= \frac{1}{\alpha} \int_0^\infty  f_\alpha(u^{-1/\alpha})\,  u^{-1/\alpha-1} e^{-xu} \, du
\label{eq:PollardML_intrep0}
\end{align} 
Thus $E_\alpha(-x)$  is completely monotone.
\label{prop:main}
\end{proposition}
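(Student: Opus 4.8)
The plan is to verify the integral representation~(\ref{eq:ML_intrep0}) by taking the one-dimensional Laplace transform in $x$ of both sides and invoking the uniqueness of Laplace transforms, in the same spirit as Feller's argument but now reading the $t$-integral as a Bayesian marginalisation against the ``$\mu=0$'' measure $t^{-1}e^{-\lambda t}\,dt$. Writing the right-hand side as
\begin{align*}
R(x) &= x \int_0^\infty f_\alpha(x\vert t)\, t^{-1} e^{-\lambda t}\,dt ,
\end{align*}
I would compute $\int_0^\infty e^{-sx} R(x)\,dx$ and show that it equals $\alpha$ times the known transform $s^{\alpha-1}/(\lambda+s^\alpha)$ of $E_\alpha(-\lambda x^\alpha)$ recorded in~(\ref{eq:LaplaceML}).

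The key computation runs as follows. After interchanging the order of the $x$- and $t$-integrations (Fubini), the inner integral is $\int_0^\infty x e^{-sx} f_\alpha(x\vert t)\,dx$; since $\int_0^\infty e^{-sx} f_\alpha(x\vert t)\,dx = e^{-ts^\alpha}$ by~(\ref{eq:stablescaled}), differentiating under the integral sign in $s$ gives
\begin{align*}
\int_0^\infty x e^{-sx} f_\alpha(x\vert t)\,dx &= -\frac{d}{ds}\, e^{-ts^\alpha} = t\,\alpha\, s^{\alpha-1} e^{-ts^\alpha}.
\end{align*}
The crucial feature is that the factor $t$ produced here cancels the $t^{-1}$ in the measure, leaving an elementary exponential integral,
\begin{align*}
\int_0^\infty e^{-sx} R(x)\,dx &= \alpha s^{\alpha-1}\int_0^\infty e^{-(\lambda+s^\alpha)t}\,dt = \frac{\alpha\, s^{\alpha-1}}{\lambda+s^\alpha},
\end{align*}
which is exactly $\alpha$ times the transform of $E_\alpha(-\lambda x^\alpha)$. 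Uniqueness of the Laplace transform then yields~(\ref{eq:ML_intrep0}).

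With the representation in hand, the Pollard form~(\ref{eq:PollardML_intrep0}) follows by specialisation rather than fresh work: setting $x=1$ in~(\ref{eq:ML_intrep0}), unfolding $f_\alpha(1\vert t)=f_\alpha(t^{-1/\alpha})t^{-1/\alpha}$, and relabelling $\lambda\to x$, $t\to u$ reproduces $E_\alpha(-x)=\tfrac{1}{\alpha}\int_0^\infty f_\alpha(u^{-1/\alpha})\,u^{-1/\alpha-1}e^{-xu}\,du$. Since Pollard's earlier result guarantees $f_\alpha>0$, the integrand is a non-negative density, so $E_\alpha(-x)$ is exhibited as the Laplace transform of a non-negative function, and complete monotonicity is immediate from Bernstein's theorem in the form~(\ref{eq:LT}).

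The step requiring genuine care, and the only real obstacle, is the justification of the two interchanges -- Fubini and the differentiation in $s$ -- together with the integrability of the mixing measure $t^{-1}e^{-\lambda t}$ near $t=0$, where the $t^{-1}$ factor is singular. Convergence there is rescued by the heavy tail of the stable law: as $t\to 0$ the argument $xt^{-1/\alpha}\to\infty$ and $f_\alpha(x\vert t)=f_\alpha(xt^{-1/\alpha})t^{-1/\alpha}$ vanishes like $t$ (using $f_\alpha(y)\sim C\,y^{-1-\alpha}$ as $y\to\infty$), so $f_\alpha(x\vert t)\,t^{-1}$ stays bounded and the representation is well defined. More cleanly, I would license the manipulations on the transform side, where the $t$ generated by the factor $x$ cancels the $t^{-1}$ before any limit is taken, rendering all resulting integrands absolutely integrable and the formal steps valid.
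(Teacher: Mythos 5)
Your proof is correct and follows essentially the same route as the paper: Laplace-transform the right-hand side in $x$, use Fubini and $\int_0^\infty e^{-sx}f_\alpha(x\vert t)\,dx=e^{-ts^\alpha}$ to obtain $\alpha s^{\alpha-1}/(\lambda+s^\alpha)$, and conclude by uniqueness. Your specialisation to the Pollard form (setting $x=1$ and relabelling $\lambda\to x$, $t\to u$) is a slightly more direct substitution than the paper's ($\lambda=1$, $x\to x^{1/\alpha}$, then $t\to xt$), and your closing remarks on justifying Fubini and on the boundedness of $f_\alpha(x\vert t)\,t^{-1}$ near $t=0$ supply analytic care the paper leaves implicit.
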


\begin{proof}[Proof of Proposition \ref{prop:main}]
The Laplace transform of the RHS of~(\ref{eq:ML_intrep0}) is
\begin{align*}
\int_0^\infty e^{-sx} \,x &\int_0^\infty f_\alpha(x\vert t) \, t^{-1} e^{-\lambda t} \,dt \, dx\\
&= -\frac{d}{ds} \int_0^\infty t^{-1} e^{-\lambda t} \int_0^\infty  e^{-sx}  f_\alpha(x\vert t) \, dx  \,dt   \\
&= -\frac{d}{ds} \int_0^\infty   t^{-1} e^{-\lambda t} \, e^{-t s^\alpha} \,dt    \\
&= \alpha s^{\alpha-1}  \int_0^\infty e^{-(\lambda + s^\alpha)t}  \,dt   \\
&= \alpha \frac{s^{\alpha-1}}{\lambda+s^\alpha} 
\end{align*}
which is the Laplace transform of the LHS $\alpha E_\alpha(-\lambda x^\alpha)$ of~(\ref{eq:ML_intrep0}). 
The expression~(\ref{eq:PollardML_intrep0}) for $E_\alpha(-x)$ follows from simple substitution 
\begin{align*}
\alpha\,E_\alpha(-x) &= x^{1/\alpha} \int_0^\infty f_\alpha(x^{1/\alpha}\vert t) \, t^{-1} e^{-t} \,dt   \\
                   &= x^{1/\alpha} \int_0^\infty f_\alpha(x^{1/\alpha}t^{-1/\alpha}) \, t^{-1/\alpha-1} e^{-t} \,dt \\
                   &= \int_0^\infty f_\alpha(u^{-1/\alpha})\,  u^{-1/\alpha-1} e^{-xu} \, du
\end{align*}
which is the Pollard result.
\end{proof}
Since Pollard's result  follows almost trivially from~(\ref{eq:ML_intrep0}),
the pertinent question  is where does this integral representation come from in the first place?
Once again, if we were to accept  it at face value as, perhaps, a fortunate guess (it does not take a particularly subtle form after all)
then there would be nothing further to be said.

Pursuing further, we observe that $t^{-1}e^{-\lambda t}$ is the L\'{e}vy density of the infinitely divisible gamma distribution.  
There is indeed an   intimate relationship between completely monotone functions and the theory of 
infinitely divisible  distributions on the nonnegative half-line $\mathbb{R}_{+}=[0,\infty)$.
This is a topic well-studied by Feller~\cite{Feller2}. 
In the balance of this paper, we shall turn  to this topic.
But first, we discuss a generalisation of the Pollard result that follows from a variant of~(\ref{eq:ML_intrep0}).

\section{Generalisation of the Pollard Result}
\label{sec:tilting}

As mentioned in Section~\ref{sec:perspectives}, $P_\alpha$ of~(\ref{eq:PollardP})
is known as  the Mittag-Leffler distribution in probabilistic literature.
There is a two-parameter generalisation known as the  generalised Mittag-Leffler distribution
 $P_{\alpha,\theta}$ 
(Pitman~\cite{Pitman_CSP}, p70 (3.27)),  also   denoted by ${\rm ML(\alpha,\theta})$
(Goldschmidt and Haas~\cite{GoldschmidtHaas}, Ho {\it et al.}~\cite{HoJamesLau}).
It may be written as  
\begin{align}
P_{\alpha,\theta}(t) 
  &= \frac{\Gamma(\theta+1)}{\Gamma(\theta/\alpha+1)} \int_0^t u^{\theta/\alpha}\, dP_\alpha(u)
\qquad \theta>-\alpha
\label{eq:genMLdistribution} 
\end{align}
$P_\alpha$ is the $\theta=0$ case: $P_\alpha\equiv P_{\alpha,0}$.
Janson~\cite{Janson} showed that $P_{\alpha,\theta}$  may be  constructed  
as a limiting distribution of a   P{\' o}lya urn scheme. 
In a concise description  of this construction, 
Goldschmidt and Haas~\cite{GoldschmidtHaas}  observed that 
``generalised Mittag-Leffler distributions arise naturally in the context of urn models''.
We show that the generalised Mittag-Leffler distribution and its Laplace transform also arise naturally 
in the context of Bayesian reasoning.

Given the  stable density $f_\alpha(x)$ ($0<\alpha<1$),  
the two-parameter density $f_{\alpha,\theta} (x)\propto  x^{-\theta}f_\alpha(x)$ 
for some parameter $\theta$ (range discussed below) is said to be a `polynomially  tilted' variant of $f_\alpha(x)$
({\it e.g.}\ Arbel {\it et al.}~\cite{Arbel}, Devroye~\cite{Devroye}, James~\cite{James_Lamperti}).
Inspired by this, we consider the polynomially tilted  density 
 $f_{\alpha,\theta} (x\vert t)\propto x^{-\theta}f_\alpha (x\vert t)$
conditioned on a general scale factor $t>0$, 
where  $f_\alpha(x)\equiv f_\alpha(x\vert t=1)$.
The normalised tilted   density conditioned on $t$ is
\begin{align}
 f_{\alpha,\theta}(x\vert t)= C_{\alpha,\theta} (t) \,x^{-\theta}f_\alpha (x\vert t)
 \quad{\rm where}\quad
C_{\alpha,\theta}(t)  &= \frac{\Gamma(\theta+1)}{\Gamma(\theta/\alpha+1)} t^{\theta/\alpha} 
\label{eq:tiltedstablenormfactor}
\end{align}
so that $f_{\alpha,\theta}(x\vert t)$ is defined for $\theta/\alpha+1>0$, or $\theta>-\alpha$.

The key idea here is to tilt the conditional stable density $f_\alpha(x\vert t)$ and assign a prior distribution to $t$ rather than
merely tilt $f_\alpha(x)\equiv f_\alpha(x\vert t=1)$. 
We consider  a variant of~(\ref{eq:ML_intrep0}) with the prior distribution $t^{-1}e^{-\lambda t}dt$ 
and $f_{\alpha}(x\vert t)$ replaced by  $f_{\alpha,\theta} (x\vert t)$.
This induces  a corresponding  two-parameter function  $h_{\alpha,\theta} (x\vert\lambda)$ in place of $E_\alpha(-\lambda x^\alpha)$,
for which the following holds: 
\begin{proposition}
Let $h_{\alpha,\theta} (x\vert\lambda)$ be defined by 
\begin{align}
 \alpha\, h_{\alpha,\theta} (x\vert\lambda) &= x \int_0^\infty f_{\alpha,\theta} (x\vert t) \, t^{-1} e^{-\lambda t} \,dt 
\label{eq:genMLfunction1} \\
   &= \frac{\Gamma(\theta+1)}{\Gamma(\theta/\alpha+1)} \, x^{1-\theta}
    \int_0^\infty f_{\alpha}(x\vert t)\, t^{\theta/\alpha-1}\, e^{-\lambda t} \, dt
\label{eq:genMLfunction2} 
\intertext{Then $h_{\alpha,\theta} (x^{1/\alpha})\equiv h_{\alpha,\theta} (x^{1/\alpha}\vert\lambda=1)$  is completely monotone with}
 h_{\alpha,\theta} (x^{1/\alpha})  &=    \int_0^\infty e^{-xt}dP_{\alpha,\theta}(t) 
\label{eq:LTgenML} 
\end{align}
\label{prop:genML}
\end{proposition}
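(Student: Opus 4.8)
The plan is to imitate the proof of Proposition~\ref{prop:main}, but to carry out the identification directly at the level of the Laplace--Stieltjes representation~(\ref{eq:LTgenML}) by a change of variables, rather than by passing through a Laplace transform in $x$. A transform in $x$ is awkward here because the tilting introduces the prefactor $x^{1-\theta}$, which for non-integer $\theta$ would force a fractional derivative of $e^{-ts^\alpha}$; a substitution sidesteps this entirely and, more importantly, lands exactly on the measure $dP_{\alpha,\theta}$.

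First I would make the target measure explicit. Differentiating the defining relation~(\ref{eq:genMLdistribution}) and inserting Pollard's density~(\ref{eq:Pollarddensity}) gives
\begin{align*}
dP_{\alpha,\theta}(t) = \frac{\Gamma(\theta+1)}{\Gamma(\theta/\alpha+1)}\, t^{\theta/\alpha} p_\alpha(t)\, dt = \frac{\Gamma(\theta+1)}{\alpha\,\Gamma(\theta/\alpha+1)}\, f_\alpha(t^{-1/\alpha})\, t^{(\theta-1)/\alpha-1}\, dt,
\end{align*}
which is nonnegative for $\theta>-\alpha$ since $f_\alpha>0$. Next I would evaluate the left-hand side of~(\ref{eq:LTgenML}): setting $\lambda=1$ and $x\to x^{1/\alpha}$ in~(\ref{eq:genMLfunction2}), unfolding the scaling $f_\alpha(x\vert t)=f_\alpha(x t^{-1/\alpha})t^{-1/\alpha}$ of~(\ref{eq:stablescaled}), and then applying the substitution $t=xu$ used at the close of Proposition~\ref{prop:main}. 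Under this substitution the argument of $f_\alpha$ collapses to $u^{-1/\alpha}$, and a short check of exponents shows that the powers of $x$ standing outside the exponential sum to $x^0$ and cancel. What remains is exactly $\int_0^\infty e^{-xu}\,dP_{\alpha,\theta}(u)$ with the measure displayed above, including the matching constant $\Gamma(\theta+1)/\Gamma(\theta/\alpha+1)$, which establishes~(\ref{eq:LTgenML}).

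Complete monotonicity is then immediate from Bernstein's theorem~(\ref{eq:LT}): $P_{\alpha,\theta}$ is nondecreasing and bounded (a genuine probability distribution for $\theta>-\alpha$), so its Laplace--Stieltjes transform $h_{\alpha,\theta}(x^{1/\alpha})$ is completely monotone. I expect no analytic obstacle; the only real work is the exponent bookkeeping that forces the $x$-dependence outside $e^{-xu}$ to cancel, together with verifying the normalising constants agree. Conceptually, the content of the identity is that the tilting factor $t^{\theta/\alpha}$ on the conditional stable density is precisely what promotes $dP_\alpha$ to $dP_{\alpha,\theta}$ after marginalising against the gamma L\'evy density $t^{-1}e^{-t}$, so that polynomial tilting corresponds exactly to the $\theta$-biasing defining the generalised Mittag-Leffler distribution.
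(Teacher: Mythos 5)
Your proposal is correct and follows essentially the same route as the paper's own proof: substitute $x\to x^{1/\alpha}$, $\lambda=1$ into~(\ref{eq:genMLfunction2}), unfold the scaling $f_\alpha(x\vert t)=f_\alpha(xt^{-1/\alpha})t^{-1/\alpha}$, change variables $t\to xt$ so the powers of $x$ outside the exponential cancel, and recognise the result as $\int_0^\infty e^{-xt}\,t^{\theta/\alpha}\,dP_\alpha(t)=\int_0^\infty e^{-xt}\,dP_{\alpha,\theta}(t)$. Your only (harmless) variation is writing out $dP_{\alpha,\theta}$ explicitly up front rather than matching against the definition~(\ref{eq:genMLdistribution}) at the end.
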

\begin{proof}[Proof of Proposition \ref{prop:genML}]
\begin{align}
 \alpha\, h_{\alpha,\theta} (x^{1/\alpha})  
   &= \frac{\Gamma(\theta+1)}{\Gamma(\theta/\alpha+1)} \, x^{1/\alpha-\theta/\alpha}
    \int_0^\infty f_{\alpha}(x^{1/\alpha}\vert t)\, t^{\theta/\alpha-1}\, e^{-t} \, dt \nonumber \\
   &= \frac{\Gamma(\theta+1)}{\Gamma(\theta/\alpha+1)} \, x^{1/\alpha-\theta/\alpha}
    \int_0^\infty f_\alpha(x^{1/\alpha}t^{-1/\alpha}) \, t^{\theta/\alpha-1/\alpha-1}\, e^{-t} \, dt \nonumber \\
\intertext{The change of variable $t\to xt$ leads to}    
 h_{\alpha,\theta} (x^{1/\alpha})  
   &= \frac{\Gamma(\theta+1)}{\Gamma(\theta/\alpha+1)}
    \int_0^\infty e^{-xt}\,  t^{\theta/\alpha} \left(\frac{1}{\alpha}\, f_\alpha(t^{-1/\alpha}) \, t^{-1/\alpha-1}\right) \, dt  \nonumber \\
   &= \frac{\Gamma(\theta+1)}{\Gamma(\theta/\alpha+1)}
    \int_0^\infty e^{-xt} \,  t^{\theta/\alpha} \, dP_\alpha(t)  \nonumber \\
    &= \int_0^\infty e^{-xt} \, dP_{\alpha,\theta}(t) \nonumber
\end{align}
Hence $h_{\alpha,\theta} (x^{1/\alpha})$  is completely monotone.
This generalises the Pollard result, which  is the particular case $\theta=0$: 
$P_{\alpha,0}(t)=P_\alpha(t) \implies
 h_{\alpha,0}(x^{1/\alpha})=E_\alpha(-x)$.
\end{proof}
It would be consistent with the foregoing discussion  to refer to $h_{\alpha,\theta} (x^{1/\alpha})$, 
as the generalised (two-parameter) Mittag-Leffler function.
However, 
we note that there already exists a two-parameter  
generalised Mittag-Leffler function defined by
\begin{align*}
E_{\alpha,\beta}(x) &= \sum_{k=0}^\infty \frac{x^k}{\Gamma(\alpha k+\beta)}
\label{eq:ML2}
\end{align*}
where $E_{\alpha} (x)\equiv E_{\alpha,1}(x)$.

This completes the discussion of the primary contribution of this paper --  
a Bayesian perspective on the complete monotonicity of the Mittag-Leffler function and its generalisation.
We now turn to the topic of  infinitely divisible distributions on $\mathbb{R}_{+}$, from which 
Proposition~\ref{prop:main} arises.

\section{Infinitely Divisible Distributions on $\mathbb{R}_{+}$}
\label{sec:ID}

Infinitely divisible distributions on $\mathbb{R}_{+}$ are covered in Feller~\cite{Feller2} (XIII.4,~XIII.7) 
as well as Steutel and van Harn (SvH)~\cite{SteutelvanHarn} (III).
The topic has an intimate relationship with completely monotone functions, as discussed in both texts.
Another relevant text in this context is Schilling {\it et al.}~\cite{Schilling} on Bernstein functions.
Sato~\cite{Sato}  considers infinitely divisible distributions on $\mathbb{R}^d$,
but the deliberate restriction to $\mathbb{R}_{+}$ makes for simpler discussion 
and relates directly to the core concept of complete monotonicity that is of interest here.
Nonetheless, we shall refer to Sato  as appropriate.

A  contribution of this paper is  the representation of infinite divisibility as a commutative diagram,
which readily leads to a limit relation enabling the  direct generation of a L\'{e}vy measure  
from its associated infinitely divisible distribution. 
Although the diagrammatic motivation is new,
the limit relation is mentioned in Steutel and van Harn~\cite{SteutelvanHarn}~(III)
and it is conceptually equivalent to that due to  Sato~\cite{Sato}. 
It is the  limit relation that   leads to the 
representation~(\ref{eq:ML_intrep0}) 
from which the Pollard result  follows.


\begin{definition}
A probability  distribution with Laplace transform $\varphi$  is infinitely divisible  {\rm (ID)}
iff for  $n>0$, the positive $n^{\rm th}$ root $\varphi^{1/n}$  is  also the Laplace transform of a probability  distribution.
\label{def:ID}
\end{definition}
\begin{theorem}[Feller~\cite{Feller2},  XIII.7, p450]
The function $\varphi$ is the Laplace transform of an  infinitely divisible probability  distribution 
iff it takes the form $\varphi=e^{-\psi}$ 
where $\psi$ has a completely monotone derivative $\psi\,^\prime$ 
and $\psi(0)=0$. 
\label{thm:ID}
\end{theorem}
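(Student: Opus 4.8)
The plan is to prove both implications of this equivalence, leaning on Bernstein's theorem~(\ref{eq:LT}), which is already available: a function is completely monotone precisely when it is the Laplace transform of a positive measure. Writing $\varphi=e^{-\psi}$, the content of the theorem is that infinite divisibility is equivalent to $\psi$ being a \emph{Bernstein function} vanishing at the origin, i.e. $\psi(0)=0$ with $\psi'$ completely monotone. I would treat necessity and sufficiency separately, since each uses Bernstein's theorem in an opposite direction.

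For \textbf{necessity}, suppose $\varphi$ is the Laplace transform of an infinitely divisible probability distribution $F$. Since $\varphi(s)=\int_0^\infty e^{-st}\,dF(t)$ with $F$ a probability measure and the integrand strictly positive, $\varphi>0$ everywhere and $\varphi(0)=1$; moreover $\varphi$ is infinitely differentiable on $(0,\infty)$. I may therefore set $\psi=-\log\varphi$, which is smooth with $\psi(0)=0$, and the whole task reduces to showing $\psi'$ is completely monotone. The device is the limit relation. By infinite divisibility each $\varphi^{1/n}=e^{-\psi/n}$ is the Laplace transform of a probability distribution $F_n$, so
\[
 g_n(s):=\frac{d}{ds}\,n\bigl(1-\varphi^{1/n}(s)\bigr)=n\int_0^\infty t\,e^{-st}\,dF_n(t)
\]
is completely monotone for every $n$, being the Laplace transform of the positive measure $n\,t\,dF_n(t)$. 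On the other hand, differentiating $n(1-e^{-\psi/n})$ directly gives $g_n=\psi'\,\varphi^{1/n}$, and since $\varphi^{1/n}=e^{-\psi/n}\to 1$ pointwise, $g_n\to\psi'$ pointwise. Complete monotonicity is preserved under pointwise limits (equivalently, by the continuity theorem the measures $n\,t\,dF_n(t)$ converge vaguely to a positive measure whose Laplace transform is the limit), so $\psi'$ is completely monotone, as required.

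For \textbf{sufficiency}, suppose $\psi(0)=0$ and $\psi'$ is completely monotone, and set $\varphi=e^{-\psi}$. It suffices to prove $\varphi$ is completely monotone: then Bernstein's theorem makes it the Laplace transform of a positive measure, and $\varphi(0)=e^{-\psi(0)}=1$ identifies that measure as a probability distribution; applying the same argument to $\psi/n$ (whose derivative $\psi'/n$ is again completely monotone and which again vanishes at $0$) shows each $\varphi^{1/n}=e^{-\psi/n}$ is a Laplace transform, giving infinite divisibility. To see $\varphi=e^{-\psi}$ is completely monotone, I would use Fa\`a di Bruno's formula in Bell-polynomial form,
\[
 \varphi^{(n)}=e^{-\psi}\,B_n\bigl(-\psi',-\psi'',\dots,-\psi^{(n)}\bigr),
\]
where $B_n$ is the complete Bell polynomial, which has nonnegative coefficients and is isobaric of weight $n$, i.e. each of its monomials $\prod_k(\,\cdot\,)^{m_k}$ satisfies $\sum_k k\,m_k=n$. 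The hypothesis that $\psi'$ is completely monotone translates to $(-1)^{k-1}\psi^{(k)}\ge 0$ for $k\ge 1$. Distributing $(-1)^n$ across a monomial via $\sum_k k\,m_k=n$ gives $(-1)^n\prod_k(-\psi^{(k)})^{m_k}=\prod_k\bigl((-1)^{k-1}\psi^{(k)}\bigr)^{m_k}\ge 0$, so $(-1)^n\varphi^{(n)}=e^{-\psi}(-1)^nB_n(\cdots)\ge 0$ for all $n$.

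The \textbf{main obstacle} is the necessity direction, specifically justifying that the pointwise limit $\psi'=\lim_n g_n$ inherits complete monotonicity: this rests on the closure of the completely monotone class under pointwise convergence, which I would secure through the continuity theorem for Laplace transforms applied to the measures $n\,t\,dF_n(t)$. By comparison the sufficiency direction is routine once the Bell-polynomial bookkeeping is set up, the only care being the isobaric weight identity that converts the global sign $(-1)^n$ into the per-derivative signs furnished by the hypothesis.
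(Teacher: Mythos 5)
Your proof is correct, but note that the paper does not actually prove Theorem~\ref{thm:ID}: its ``proof'' is a pointer to Feller XIII.7, so the relevant comparison is with Feller's own argument. Your necessity direction is essentially Feller's: he works with $\psi_n = n(1-\varphi^{1/n})$, notes each $\psi_n$ has a completely monotone derivative because $-(\varphi^{1/n})'$ is the Laplace transform of the positive measure $t\,dF_n(t)$, and passes to the pointwise limit $\psi_n\to-\log\varphi$ using closure of the completely monotone class under pointwise limits --- exactly the step you isolate as the main obstacle and secure in the same way. Where you genuinely diverge is sufficiency: Feller (and, implicitly, this paper via Proposition~\ref{prop:composition}) deduces that $e^{-\psi}$ is completely monotone from the composition criterion ``$\varphi$ completely monotone and $\eta$ positive with completely monotone derivative imply $\varphi(\eta)$ completely monotone,'' applied to $\varphi(u)=e^{-u}$, whereas you compute $(-1)^n\varphi^{(n)}\ge 0$ directly via Fa\`a di Bruno and the isobaric weight of the Bell polynomials. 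Both are sound; your sign bookkeeping $(-1)^n\prod_k(-\psi^{(k)})^{m_k}=\prod_k\bigl((-1)^{k-1}\psi^{(k)}\bigr)^{m_k}$ checks out precisely because $\sum_k k\,m_k=n$. The Bell-polynomial route is self-contained and makes the nonnegativity completely explicit; the composition-criterion route is more modular and is in any case needed later in the paper for Proposition~\ref{prop:IDcomposition}. Two small points deserve a sentence each in a final write-up: $\varphi>0$ everywhere (so that $\log\varphi$ is defined) because a probability measure on $[0,\infty)$ puts positive mass on some bounded set; and in the sufficiency direction the Bernstein measure is a probability measure because $\varphi(0^+)=e^{-\psi(0^+)}=1$, which uses continuity of $\psi$ at $0$ together with $\psi(0)=0$.
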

\begin{proof}[Proof of Theorem~\ref{thm:ID}]
See Feller~\cite{Feller2}, XIII.7, p450.
\label{proof:ID}
\end{proof}
Infinite divisibility on $\mathbb{R}_{+}$ thus amounts to the study of the function $\psi$.
We introduce a scale parameter $\mu>0$ so that $\varphi(s)\to\varphi(s\vert \mu)=e^{-\mu\psi}$
is the   Laplace transform of an  infinitely divisible probability  
density $f(x\vert \mu$).
Correspondingly, $\varphi(s\vert \mu)^{1/n}=e^{-\mu\psi(s)/n}=\varphi(s\vert \tfrac{\mu}{n})$
is the Laplace transform of 
$f(x\vert \tfrac{\mu}{n})$.
Since $\psi\,^\prime$  is completely monotone,  it is the  Laplace transform of a 
density $r(x)$, say (which need not be normalisable -- 
{\it i.e.}\ unlike $\psi(0)$, $\psi\,^\prime(0)$ need not be finite).
$\ell(x)=r(x)/x$ is known as  the density of the L\'{e}vy measure $L(dx)=\ell(x)dx$, or simply  the L\'{e}vy density.
We shall also refer to the measures  $R(dx)=r(x)dx=xL(dx)$ and $F(dx\vert \mu) = f(x\vert \mu)dx$.


We find it helpful to represent ID objects and relationships amongst them as a commutative diagram.
If we seek the  density $r(x)$ of  a given ID density $f(x\vert \mu)$, we
may proceed as illustrated in the upper diagram of (\ref{eq:LKCD}) (where $\scrL$ denotes the Laplace transform):
\begin{enumerate}
\item take the Laplace  transform $\varphi(s\vert \mu)=e^{-\mu\psi(s)}$ of $f(x\vert \mu)$
\item take (minus) the logarithmic derivative of $\varphi(s\vert \mu)$ to obtain  $\mu\,\psi\,^\prime(s)$
\item evaluate the inverse Laplace  transform of $\psi\,^\prime(s)$ to obtain $r(x)=x\,\ell(x)$
\end{enumerate}

\begin{equation}
\begin{tikzpicture}[auto,scale=1.5, baseline=(current  bounding  box.center)]
\newcommand*{\size}{\scriptsize}%
\newcommand*{\gap}{.2ex}%
\newcommand*{\width}{2.5}%
\newcommand*{\height}{1.5}%

\node (P) at (0,0)  {$f(x\vert \mu)$};
\node (Q) at ($(P)+(\width,0)$) {$\varphi(s\vert \mu) = e^{-\mu\psi(s)}$};
\node (B) at ($(P)-(0,\height)$) {$\mu r(x)$};
\node (C) at ($(B)+(\width,0)$) {$\mu\psi\,^\prime(s)$};   

\draw[Myarrow] ([yshift =  \gap]P.east)  --  node[above] {\size $\scrL$} ([yshift = \gap]Q.west) ;
\draw[Myarrow]([xshift  =  \gap]Q.south) --  
node [right] {\size $-\frac{\varphi^\prime(s\vert \mu)}{\varphi(s\vert \mu)}$} ([xshift =  \gap]C.north);
\draw[Myarrow,dashed] ([xshift =  \gap]P.south) --
node [left] {\size ?}  ([xshift =  \gap]B.north);
\draw[Myarrow] ([yshift = -\gap]C.west) --  node[above] {\size ${\scrL}^{-1}$}  ([yshift = -\gap]B.east); 

\node (P) at (0,-1.5*\height)  {$f(x\vert \mu)$};
\node (Q) at ($(P)+(\width,0)$) {$\varphi(s\vert \mu)$};
\node (B) at ($(P)-(0,\height)$) {$\mu r(x)$};
\node (C) at ($(B)+(\width,0)$) {$\mu\psi \,^\prime(s)$};   

\draw[Myarrow] ([yshift =  \gap]P.east)  --  node[above] {\size $\scrL$} ([yshift = \gap]Q.west) ;
\draw[Myarrow]([xshift  =  \gap]Q.south) --  
node[right] {\size $-\displaystyle\lim_{n\to\infty}n\varphi\,^\prime(s\vert \tfrac{\mu}{n})$} ([xshift =  \gap]C.north);
\draw[Myarrow] ([xshift =  \gap]P.south) -- 
node[left] {\size ${{\displaystyle\lim_{n\to\infty}}n x f(x\vert \frac{\mu}{n})}$} ([xshift =  \gap]B.north);
\draw[Myarrow] ([yshift = -\gap]C.west) --  node[above] {\size ${\scrL}^{-1}$}  ([yshift = -\gap]B.east); 
\end{tikzpicture}
\label{eq:LKCD}
\end{equation}
The natural question suggested by the upper diagram is whether we can find an equivalent direct transition from 
 $f(x\vert \mu)$ to $\mu r(x)$. 
 The answer is affirmative, as formalised in the following:
 \begin{corollary}
 Let $F(dx\vert \mu) = f(x\vert \mu)dx$ be infinitely divisible with  L\'{e}vy measure $L(dx)=R(dx)/x$ 
 with density $\ell(x)=r(x)/x$.
 Then the following 
 holds:
 \label{cor:limit}
 \begin{alignat}{4}
\mu\,\ell(x) &= \lim_{n\to\infty}n f(x\vert \tfrac{\mu}{n}) \quad &&{\rm or}\quad
\mu L(dx) &&= \lim_{n\to\infty}n F(dx\vert \tfrac{\mu}{n}) 
\label{eq:limitL} \\
\mu\,r(x)   &= \lim_{n\to\infty}n x f(x\vert \tfrac{\mu}{n}) \quad &&{\rm or}\quad
\mu R(dx) &&= \lim_{n\to\infty}n x F(dx\vert \tfrac{\mu}{n})
\label{eq:limitR}
\end{alignat}
 \end{corollary}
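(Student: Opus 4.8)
The plan is to establish the measure relation (\ref{eq:limitR}) first---multiplying the density by $x$ regularises the behaviour near the origin and yields a finite Laplace transform for each $s>0$---and then to deduce (\ref{eq:limitL}) by dividing through by $x$.

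First I would compute the Laplace transform of the candidate right-hand side of (\ref{eq:limitR}). Writing $\varphi(s\vert\tfrac{\mu}{n})=e^{-\mu\psi(s)/n}$ from Theorem~\ref{thm:ID} and using the elementary identity $\scrL[x\,f(x\vert\tfrac{\mu}{n})](s)=-\tfrac{d}{ds}\varphi(s\vert\tfrac{\mu}{n})$, a single differentiation gives
\begin{align*}
\scrL\!\left[n\,x\,f(x\vert\tfrac{\mu}{n})\right](s) &= -n\,\frac{d}{ds}\,e^{-\mu\psi(s)/n} = \mu\,\psi\,^\prime(s)\,e^{-\mu\psi(s)/n}.
\end{align*}
Because $\varphi(s\vert\mu)=e^{-\mu\psi(s)}$ is the Laplace transform of a probability density, $\psi(s)$ is finite for every $s>0$, so $e^{-\mu\psi(s)/n}\to1$ and the transform converges pointwise to $\mu\,\psi\,^\prime(s)$. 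By the definition of $r$ as the completely monotone inverse transform of $\psi\,^\prime$, this limit is precisely $\scrL[\mu\,r](s)$, matching the left-hand side of (\ref{eq:limitR}).

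Next I would promote this pointwise convergence of Laplace transforms to convergence of the underlying measures $\nu_n(dx)=n\,x\,f(x\vert\tfrac{\mu}{n})\,dx$ by means of the continuity theorem for Laplace transforms. When $\psi\,^\prime(0^+)<\infty$ the total masses $\nu_n([0,\infty))=\mu\,\psi\,^\prime(0^+)$ are uniformly bounded and the classical (sub)probability form of the theorem applies immediately. In general the L\'evy density may be non-integrable at the origin, so $\psi\,^\prime(0^+)=\infty$ and the masses $\nu_n([0,\infty))$ diverge; one then invokes the vague-convergence form of the continuity theorem on $(0,\infty)$, for which convergence of the transforms at every $s>0$ already suffices. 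Finally, testing $\nu_n$ against $g(x)/x$ for $g$ compactly supported in $(0,\infty)$---equivalently, dividing the limiting identity by $x$---turns (\ref{eq:limitR}) into (\ref{eq:limitL}).

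I expect the continuity-theorem step to be the main obstacle, precisely in the infinite-L\'evy-mass case $\psi\,^\prime(0^+)=\infty$. The regularising factor $x$ controls the singularity at the origin for each fixed $s>0$, but it does not tame the total mass, so one cannot appeal to the convenient probability-measure version of the continuity theorem; the passage to vague convergence on the open half-line $(0,\infty)$ is what carries the argument through.
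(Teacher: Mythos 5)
Your proposal follows essentially the same route as the paper: both compute $-n\,\varphi\,^\prime(s\vert\tfrac{\mu}{n})=\mu\,\psi\,^\prime(s)\,e^{-\mu\psi(s)/n}\to\mu\,\psi\,^\prime(s)$ and then invert to identify the limit with $\mu\,r(x)$, deducing~(\ref{eq:limitL}) from~(\ref{eq:limitR}) by dividing by $x$. The only difference is that you make explicit the justification for passing from convergence of the transforms to convergence of the measures (the extended continuity theorem, with the vague-convergence form on $(0,\infty)$ when $\psi\,^\prime(0^+)=\infty$), a step the paper's proof leaves implicit; this is a welcome tightening rather than a different argument.
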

 \begin{proof}[Proof of Corollary \ref{cor:limit}]
By Theorem~\ref{thm:ID}, $f(x\vert \mu)$ has Laplace transform  
$\varphi(s\vert \mu)=e^{-\mu\psi(s)}$ so that  $\varphi(s\vert \mu=0)=1$. Hence 
 \begin{align}
 -\lim_{n\to\infty}n\varphi\,^\prime\left(s\vert \tfrac{\mu}{n}\right) &= 
  \lim_{n\to\infty}\mu\psi \,^\prime(s) \varphi\left(s\vert \tfrac{\mu}{n}\right)  
  = \mu\psi \,^\prime(s)\varphi(s\vert 0) = \mu\psi \,^\prime(s) 
\label{eq:limitLaplace}
\end{align}
Since $-\varphi\,^\prime(s\vert \mu)$ is the Laplace  transform of $xf(x\vert \mu)$, it follows that
\begin{align}
\mu\,r(x) &= \lim_{n\to\infty}n\, x\,f\left(x\vert \tfrac{\mu}{n}\right) \quad\implies\quad
\mu\,\ell(x) = \lim_{n\to\infty}n \,f\left(x\vert \tfrac{\mu}{n}\right)
\label{eq:limit} 
\end{align}
This  is   invariant under scaling by $C>0$: $f(x\vert \mu)\to Cf(x\vert \mu)$. 
 \label{proof:limit}
 \end{proof}
 The  lower diagram of~(\ref{eq:LKCD}) is the desired  commutative  diagram.
To be clear, Corollary~\ref{cor:limit} is known.
To aid comparison with the literature,
Corollary~\ref{cor:limit} implies that, given a function $h$ on $\mathbb{R}_{+}$ (and  finite $x$) 
 \begin{align}
\int_0^x h(u) R(du)  &= \lim_{n\to\infty} \tfrac{n}{\mu} \int_0^x u \, h(u) F(du\vert \tfrac{\mu}{n})
\label{eq:intlimr} \\
\int_0^x h(u) L(du)  &= \lim_{n\to\infty} \tfrac{n}{\mu} \int_0^x \, h(u) F(du\vert \tfrac{\mu}{n})
\label{eq:intliml} 
\end{align}
The relation  in SvH~\cite{SteutelvanHarn}~(III(4.7))  is a particular case of~(\ref{eq:intlimr}).
Also,  Sato~\cite{Sato} (Corollary~8.9) proved the limit relation 
 \begin{align}
\int_{\mathbb{R}^d} h(x) L(dx)  &= \lim_{t\to0} t^{-1} \int_{\mathbb{R}^d}  h(x) F(dx\vert t)
\label{eq:sato} 
\end{align}
for suitably behaved $h(x)$.
Choosing   $\mathbb{R}_{+}$ instead of $\mathbb{R}^d$ in Sato's relation~(\ref{eq:sato})
and setting   $t=\mu/n$  reproduces~(\ref{eq:intliml}) where $x=\infty$ is allowable.
However, working in $\mathbb{R}_{+}$ from the outset   makes for much  simpler discussion  of infinitely divisible distributions on 
$\mathbb{R}_{+}$ relative to working  in $\mathbb{R}^d$ and then trying to infer  behaviour on $\mathbb{R}_{+}$ as a special case.

The contribution here is the intuitive  manner in which the limit relation  arises from a commutative diagram argument. 
Furthermore, the limiting rule from $f(x\vert \mu)$ to $\mu r(x)$ stated here explicitly  preserves the scale factor $\mu$,
in keeping with the indirect route via the Laplace transform, which does not actually require the evaluation of a limit.

Aside from SvH and Sato, there  appears to be limited discussion of 
inferring the L\'{e}vy measure or its properties directly from the corresponding infinitely divisible distribution. 
For instance, Barndorff-Nielsen and Hubalek~\cite{BarndorffHubalek} cited Sato's relation 
at the start before turning  to 
``the opposite problem, that 
 of calculating $F(dx\vert t)$ from $L(dx)$''.



Since the direct  route of Corollary~\ref{cor:limit} and the indirect Laplace route both lead from $f(x\vert \mu)$
 to the same object $\mu\,r(x)$, 
the  natural question  is whether Corollary~\ref{cor:limit}  is of  much practical value. 
The  answer is that  the two routes can lead to different  representations of the same object  $r(x)$.
Therein lies the practical value of Corollary~\ref{cor:limit}. 
To that end, we first introduce some additional properties of complete monotonicity that we shall need.

 \section{More on Complete Monotonicity}
 \label{sec:CM}
 We summarise additional  properties of  completely monotone functions, as covered in Feller~\cite{Feller2}, XIII,
 except for the proofs given here.
\begin{proposition}
If $\varphi$ and $\vartheta$ are  completely monotone,   so is their product $\varphi\vartheta$
\label{prop:CMproduct}
\end{proposition}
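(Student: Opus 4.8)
The plan is to give the elementary direct proof via the Leibniz rule for differentiating a product, which reduces the claim to a sign-bookkeeping exercise. First I would write the $n$-th derivative of the product as
\begin{align}
(\varphi\vartheta)^{(n)}(x) &= \sum_{k=0}^n \binom{n}{k}\,\varphi^{(k)}(x)\,\vartheta^{(n-k)}(x). \nonumber
\end{align}
Multiplying through by $(-1)^n$ and using the factorisation $(-1)^n=(-1)^k(-1)^{n-k}$, each summand becomes the product of $(-1)^k\varphi^{(k)}(x)$ and $(-1)^{n-k}\vartheta^{(n-k)}(x)$, weighted by the non-negative binomial coefficient $\binom{n}{k}$.

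The key step is then to observe that, by the definition of complete monotonicity applied separately to $\varphi$ and $\vartheta$, each bracketed factor $(-1)^k\varphi^{(k)}(x)$ and $(-1)^{n-k}\vartheta^{(n-k)}(x)$ is non-negative for $x>0$. Since a finite sum of products of non-negative quantities is non-negative, I conclude $(-1)^n(\varphi\vartheta)^{(n)}(x)\ge0$ for every $n\ge0$, which is precisely the assertion that $\varphi\vartheta$ is completely monotone. Infinite differentiability of the product is inherited from that of the two factors, so no separate regularity argument is needed.

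There is essentially no analytic obstacle here: the only point requiring care is the sign accounting, which the identity $(-1)^n=(-1)^k(-1)^{n-k}$ handles cleanly. As an alternative route one could invoke Bernstein's theorem~(\ref{eq:LT}): writing $\varphi$ and $\vartheta$ as Laplace transforms of non-negative measures $dF$ and $dG$, their product is the Laplace transform of the convolution $F*G$, which is again a non-negative measure, and hence completely monotone. I would favour the Leibniz argument, since it is self-contained and avoids the harder direction of Bernstein's theorem; but the convolution viewpoint is worth recording because it ties the elementary fact directly to the probabilistic and infinite-divisibility themes developed elsewhere in the paper.
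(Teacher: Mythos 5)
Your proof is correct, but your primary route differs from the one the paper writes out. The paper's own argument is exactly what you relegate to an ``alternative'': it invokes Bernstein's theorem to write $\varphi$ and $\vartheta$ as Laplace transforms, identifies $\varphi\vartheta$ as the Laplace transform of the convolution, and concludes by Bernstein again (the easy direction). Your main argument --- the Leibniz expansion
\begin{equation*}
(-1)^n(\varphi\vartheta)^{(n)}(x)=\sum_{k=0}^n \binom{n}{k}\left[(-1)^k\varphi^{(k)}(x)\right]\left[(-1)^{n-k}\vartheta^{(n-k)}(x)\right]\ge 0
\end{equation*}
--- is the classical elementary proof (essentially the one in Feller XIII.4, which the paper cites but does not reproduce). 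The comparison you draw yourself is the right one: the Leibniz argument is self-contained and needs only the definition of complete monotonicity, whereas the convolution argument leans on the hard direction of Bernstein's theorem (completely monotone $\Rightarrow$ Laplace transform of a measure). In exchange, the convolution viewpoint is the one that meshes with the paper's probabilistic and infinite-divisibility narrative, which is presumably why the author prefers it. One small refinement to your alternative route: Bernstein's theorem yields non-decreasing distribution functions, i.e.\ non-negative measures, not necessarily densities, so the convolution should be taken at the level of measures (as you in fact do, writing $F*G$); the paper's phrasing in terms of ``densities'' is slightly looser than yours on this point.
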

\begin{proof}[Proof of Proposition \ref{prop:CMproduct}]
See Feller~\cite{Feller2}, XIII.4, p441.\\
Alternatively, being completely monotone, $\varphi$ and $\vartheta$ are  Laplace transforms of densities. 
The product $\varphi\vartheta$ is thus the Laplace transform of the convolution of said densities, which
is also a density. Therefore $\varphi\vartheta$ is completely monotone.
\label{proof:CMproduct}
\end{proof}

\begin{proposition}
If $\varphi$ is completely monotone and $\eta$ is a positive function with a completely monotone derivative,  
$\varphi(\eta)$ is completely monotone
\label{prop:composition}
\end{proposition}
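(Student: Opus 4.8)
The plan is to prove the statement by combining Bernstein's theorem with the infinite-divisibility characterisation of Theorem~\ref{thm:ID}. The hypothesis that $\eta$ is a positive function with completely monotone derivative is precisely the statement that $\eta$ is a Bernstein function; in particular the constant $c=\eta(0^+)$ exists and is finite. First I would invoke Bernstein's theorem~(\ref{eq:LT}) to write $\varphi(s)=\int_0^\infty e^{-su}\,dF(u)$ for a non-decreasing $F$, and thereby represent the composition as a mixture
\[
\varphi(\eta(x)) = \int_0^\infty e^{-u\,\eta(x)}\,dF(u).
\]
The problem then reduces to showing that, for each fixed $u>0$, the single factor $x\mapsto e^{-u\,\eta(x)}$ is completely monotone; complete monotonicity of the mixture then follows by differentiating under the integral sign, since every integrand $(-1)^n\tfrac{d^n}{dx^n}e^{-u\,\eta(x)}$ is nonnegative and $dF\ge0$.

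The central step is therefore the claim that $e^{-u\,\eta(x)}$ is completely monotone in $x$. Here I would set $\tilde\eta(x)=u\bigl(\eta(x)-c\bigr)$, so that $\tilde\eta(0)=0$ and $\tilde\eta\,'=u\,\eta\,'$ is completely monotone. Theorem~\ref{thm:ID} then applies directly: $e^{-\tilde\eta}$ is the Laplace transform of an infinitely divisible probability distribution, and hence, being a Laplace transform of a distribution, is completely monotone by the converse direction of Bernstein's theorem. Multiplying by the positive constant $e^{-uc}$ recovers $e^{-u\,\eta(x)}$, which is then completely monotone; alternatively, Proposition~\ref{prop:CMproduct} gives the product of this constant and the completely monotone factor.

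A fully self-contained alternative avoids Theorem~\ref{thm:ID} and instead tracks signs in Fa\`a di Bruno's formula for the $n$-th derivative of $\varphi(\eta(x))$. Each term is a positive combinatorial coefficient times $\varphi^{(k)}(\eta)\prod_{j\ge1}\bigl(\eta^{(j)}\bigr)^{m_j}$ with $\sum_j j\,m_j=n$ and $k=\sum_j m_j$. Complete monotonicity of $\varphi$ gives $\varphi^{(k)}$ the sign $(-1)^k$, while complete monotonicity of $\eta\,'$ gives $\eta^{(j)}$ the sign $(-1)^{j-1}$; the product of the $\eta$-factors then carries the sign $(-1)^{\sum_j(j-1)m_j}=(-1)^{\,n-k}$, so every term has overall sign $(-1)^k(-1)^{\,n-k}=(-1)^n$. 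Hence $(-1)^n\tfrac{d^n}{dx^n}\varphi(\eta(x))\ge0$ termwise.

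I expect the main obstacle to be analytic rather than algebraic. In the mixture approach it is the justification of differentiation under the integral sign, which I would handle by dominated/monotone convergence using that each derivative of the integrand is sign-definite, together with the clean disposal of the additive constant $\eta(0^+)$ so that the hypothesis $\psi(0)=0$ of Theorem~\ref{thm:ID} is met. The Fa\`a di Bruno route is combinatorially heavier but conceptually transparent once the per-term sign $(-1)^n$ is verified, and it requires no regularity beyond the assumed infinite differentiability.
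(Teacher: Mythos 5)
Your proposal is correct, but it proves the proposition by a genuinely different route from the paper. The paper's proof is a short chain-rule argument: it writes $-\tfrac{d}{ds}\varphi(\eta(s))=\left(-\tfrac{d\varphi(\eta)}{d\eta}\right)\eta\,'(s)$, observes that this is a product of two completely monotone functions, and invokes Proposition~\ref{prop:CMproduct}; positivity of $\varphi(\eta)$ plus (implicitly, by iterating the same step) the alternating signs of all higher derivatives complete the argument. Your Fa\`a di Bruno computation is essentially the fully unpacked, non-inductive version of that same sign bookkeeping, and it is complete and correct as stated: the per-term sign $(-1)^k(-1)^{n-k}=(-1)^n$ is exactly right, and this route needs no representation theorems at all. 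That part of your proposal would stand on its own as a self-contained proof.

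Your primary route, however, has a latent circularity you should be aware of. You reduce the problem to showing $e^{-u\eta(x)}$ is completely monotone and then invoke Theorem~\ref{thm:ID}. But in Feller's development (which is what the paper cites for the proof of Theorem~\ref{thm:ID}), the sufficiency direction of that theorem --- that $\psi(0)=0$ and $\psi\,'$ completely monotone imply $e^{-\psi}$ is the Laplace transform of a distribution --- is itself established by applying the composition criterion (Criterion~2 of Feller XIII.4, i.e.\ precisely Proposition~\ref{prop:composition}) with $\varphi(x)=e^{-x}$. So deriving the composition result from Theorem~\ref{thm:ID} inverts the logical order unless you supply an independent proof of the latter. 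The remaining analytic points you raise are handled correctly: $\eta(0^+)$ is finite because $\eta$ is positive and non-decreasing, the additive constant is disposed of via Proposition~\ref{prop:CMproduct}, and differentiation under the integral in the Bernstein mixture is justified by the sign-definiteness of the integrands. In short: keep the Fa\`a di Bruno argument (or the paper's product/chain-rule induction) as the actual proof, and treat the mixture picture as an instructive but logically downstream reformulation.
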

\begin{proof}[Proof of Proposition \ref{prop:composition}]
If $\varphi(s)$ is completely monotone,  so is $-\varphi\,^\prime(s)$. 
Consider $\varphi(\eta)$ where $\eta(s) > 0$ and $\eta\,^\prime(s)$ is completely monotone. 
Of necessity, $\varphi(\eta)>0$ and 
\begin{align}
-\varphi\,^\prime(\eta) &= \left(-\frac{d\varphi(\eta)}{d\eta}\right)\eta\,^\prime(s)
\label{eq:composition}
\end{align}
The RHS is a product of two completely monotone functions. 
Therefore, by Proposition~\ref{prop:CMproduct}, $-\varphi\,^\prime(\eta)$ is completely monotone.  
This, along with 
$\varphi(\eta)>0$, completes the proof that $\varphi(\eta)$ is completely monotone.
\label{proof:composition}
\end{proof}

\begin{proposition}
If $\varphi$ is the Laplace  transform of an infinitely divisible distribution and $\eta$ is a positive function with a completely monotone derivative,  $\varphi(\eta)$ is also the Laplace  transform of  an infinitely divisible distribution.
\label{prop:IDcomposition}
\end{proposition}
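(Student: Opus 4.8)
The plan is to reduce the claim to the exponent characterisation of infinite divisibility in Theorem~\ref{thm:ID}: a function is the Laplace transform of an infinitely divisible distribution exactly when it has the form $e^{-\psi}$ with $\psi(0)=0$ and $\psi'$ completely monotone. So I would write $\varphi$ in this form, observe that $\varphi(\eta)$ automatically acquires an exponent representation $e^{-\psi(\eta)}$, and then check that the composed exponent $\Psi\equiv\psi\circ\eta$ again satisfies the two hypotheses of Theorem~\ref{thm:ID}. In spirit this is the same reduction used in the proof of Proposition~\ref{prop:composition}, now lifted one level up from complete monotonicity to infinite divisibility.

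Concretely, I would first invoke Theorem~\ref{thm:ID} to write $\varphi=e^{-\psi}$ with $\psi(0)=0$ and $\psi'$ completely monotone. Substituting $\eta$ gives $\varphi(\eta)=e^{-\Psi}$ with $\Psi\equiv\psi\circ\eta$, so it is enough to verify the two conditions on $\Psi$.

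The central step is the completely monotone derivative. By the chain rule $\Psi'=\psi'(\eta)\,\eta'$. Since $\psi'$ is completely monotone and $\eta$ is a positive function with completely monotone derivative, Proposition~\ref{prop:composition} makes $\psi'(\eta)$ completely monotone; the remaining factor $\eta'$ is completely monotone by hypothesis; and a product of two completely monotone functions is completely monotone by Proposition~\ref{prop:CMproduct}. Hence $\Psi'$ is completely monotone. This is where the two preceding propositions do all the work, and once the chain rule is written down there is essentially no difficulty.

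The point that genuinely needs care --- and what I expect to be the main obstacle --- is the normalisation condition $\Psi(0)=0$, that is $\psi(\eta(0))=0$. Because $\psi'$ is completely monotone, $\psi$ is nonnegative and nondecreasing with $\psi(0)=0$, so $\Psi(0)=0$ is guaranteed precisely when $\eta(0)=0$; a positive function with completely monotone derivative that also vanishes at the origin is exactly a Bernstein function vanishing at $0$, the natural class for subordination. I would therefore make $\eta(0)=0$ explicit (or assume it within the intended scope), whence $\Psi(0)=\psi(0)=0$ and Theorem~\ref{thm:ID} applies to conclude that $\varphi(\eta)=e^{-\Psi}$ is the Laplace transform of an infinitely divisible probability distribution. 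If one only assumes $\eta(0)>0$, the constant factor $e^{-\psi(\eta(0))}<1$ merely renders the distribution defective, and the full statement is recovered after renormalisation.
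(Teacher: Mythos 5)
Your proof is correct and follows essentially the same route as the paper: reduce to the exponent characterisation of Theorem~\ref{thm:ID}, write $\varphi(\eta)=e^{-\psi(\eta)}$, and verify that the composed exponent has a completely monotone derivative using Propositions~\ref{prop:composition} and~\ref{prop:CMproduct}. If anything you are more careful than the paper's own proof, which cites only Proposition~\ref{prop:composition} for ``$\psi'(\eta)$'' without spelling out the chain rule $(\psi\circ\eta)'=\psi'(\eta)\,\eta'$ and the ensuing product step, and which is silent on the normalisation $\psi(\eta(0))=0$ (i.e.\ the need for $\eta(0)=0$, or a renormalisation when the distribution is defective) that you rightly flag.
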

\begin{proof}[Proof of Proposition \ref{prop:IDcomposition}]
By Theorem~\ref{thm:ID}, $\varphi$ is the Laplace  transform of an infinitely divisible distribution iff 
$\varphi=e^{-\psi}$ where $\psi\,^\prime$ is completely monotone.
By Proposition~\ref{prop:composition}, if $\psi^\prime$ is completely monotone and  
$\eta$ is a positive function with a completely monotone derivative, $\psi^\prime(\eta)$ is completely monotone.
Hence, for such $\eta$,  $\varphi(\eta)=e^{-\psi(\eta)}$ is  the Laplace  transform of an infinitely divisible distribution.
\label{proof:IDcomposition}
\end{proof}

We may now revisit the Bayesian formulation of  Section~\ref{sec:Bayesian}.

\section{Bayesian Approach  Revisited}
\label{sec:BayesianID}


\begin{theorem}
Let $m(x\vert \mu)$, $f(x\vert y)$ and $g(y\vert \mu)$ be densities on $[0,\infty)$ such that 
\begin{align}
m(x\vert \mu) &= \int_0^{\infty}f(x\vert y)\,g(y\vert \mu)\,dy
\label{eq:mix}
\end{align}
If $f(x\vert y)$ and $g(y\vert \mu)$ are infinitely divisible, with Laplace transforms 
$e^{-y\eta(s)}$ and $e^{-\mu\psi(s)}$ respectively, where $\eta\,^\prime(s)$ and $\psi\,^\prime(s)$ are  completely monotone, then:
\begin{enumerate}
\item  $m(x\vert \mu)$ is also infinitely divisible 
\item  the L\'{e}vy density $\xi(x)$, say, of $m(x\vert \mu)$  is 
\begin{align}
\xi(x) &= \int_0^\infty f(x\vert y)\,\ell(y)\,dy
\label{eq:marginalLevy}
\end{align}
\end{enumerate}
where $\ell(y)$ is the L\'{e}vy density of $g(y\vert \mu)$.
\label{thm:marginal}
\end{theorem}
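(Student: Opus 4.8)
The plan is to route everything through Laplace transforms and then invoke the composition result (Proposition~\ref{prop:IDcomposition}) for part~1 and the limit relation (Corollary~\ref{cor:limit}) for part~2. First I would compute the Laplace transform of $m(x\vert\mu)$. Applying Fubini to~\eqref{eq:mix} and using that $f(\cdot\vert y)$ has Laplace transform $e^{-y\eta(s)}$ gives
\[
\int_0^\infty e^{-sx}m(x\vert\mu)\,dx
 = \int_0^\infty e^{-y\eta(s)}\,g(y\vert\mu)\,dy
 = e^{-\mu\psi(\eta(s))},
\]
where the last equality recognises the integral as the Laplace transform $e^{-\mu\psi}$ of $g(\cdot\vert\mu)$ evaluated at $\eta(s)$. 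Thus the Laplace transform of $m$ is the composition $\varphi_g(\eta)$, where $\varphi_g=e^{-\mu\psi}$ is the (infinitely divisible) Laplace transform of $g$, and it has the form $e^{-\mu\Phi(s)}$ with $\Phi=\psi\circ\eta$ and $\Phi(0)=\psi(\eta(0))=0$.

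For part~1 I would note that $\eta(0)=0$ (since $f(\cdot\vert y)$ is a probability density, its Laplace transform equals $1$ at $s=0$, forcing $e^{-y\eta(0)}=1$) and that complete monotonicity of $\eta\,^\prime$ makes $\eta$ nondecreasing with $\eta(s)>0$ for $s>0$. Hence $\eta$ is a positive function with completely monotone derivative, and Proposition~\ref{prop:IDcomposition} applied to $\varphi_g$ shows immediately that $\varphi_g(\eta)=e^{-\mu\psi(\eta(s))}$ is the Laplace transform of an infinitely divisible distribution, namely $m(x\vert\mu)$.

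For part~2 I would use the direct route supplied by Corollary~\ref{cor:limit}, which is precisely where its practical value surfaces. Applying the limit relation~\eqref{eq:limit} to $m$ itself gives $\mu\,\xi(x)=\lim_{n\to\infty}n\,m(x\vert\tfrac{\mu}{n})$. Because $f(x\vert y)$ carries no dependence on the scale parameter---only the mixing density does---I can write $n\,m(x\vert\tfrac{\mu}{n})=\int_0^\infty f(x\vert y)\,\bigl[n\,g(y\vert\tfrac{\mu}{n})\bigr]\,dy$ and then apply Corollary~\ref{cor:limit} to $g$, for which $\lim_{n\to\infty}n\,g(y\vert\tfrac{\mu}{n})=\mu\,\ell(y)$. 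Passing the limit through the integral yields $\mu\,\xi(x)=\int_0^\infty f(x\vert y)\,\mu\,\ell(y)\,dy$, and dividing by $\mu$ produces~\eqref{eq:marginalLevy}.

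The hard part will be justifying the interchange of limit and integral in the last step; I would secure it with a dominated- or monotone-convergence argument, exploiting the controlled way in which $n\,g(y\vert\tfrac{\mu}{n})$ approaches $\mu\,\ell(y)$ underlying the limit relation. As an alternative that bypasses the interchange entirely and independently confirms the identity, I would verify part~2 on the Laplace side: the measure $r_m(x)=x\,\xi(x)$ of the infinitely divisible $m$ has Laplace transform $(\psi\circ\eta)\,^\prime(s)=\psi\,^\prime(\eta(s))\,\eta\,^\prime(s)$, while taking the Laplace transform of $x\int_0^\infty f(x\vert y)\,\ell(y)\,dy$ and using $y\,\ell(y)=r_g(y)$ (whose Laplace transform is $\psi\,^\prime$) gives $\eta\,^\prime(s)\,\psi\,^\prime(\eta(s))$; matching the two and appealing to uniqueness of Laplace transforms closes the argument.
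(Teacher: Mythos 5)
Your proposal follows essentially the same route as the paper's own proof: part~1 via the composition $e^{-\mu\psi(\eta(s))}$ and Proposition~\ref{prop:IDcomposition}, and part~2 by applying the limit relation of Corollary~\ref{cor:limit} to $g$ inside the mixture integral. The only differences are that you flag the limit--integral interchange (which the paper performs without comment) and add an optional Laplace-side cross-check; both are sound refinements rather than a different argument.
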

Feller~\cite{Feller2} (p451) discussed 
the first part of this theorem  in an example on subordination of processes. 
One may also refer to SvH~\cite{SteutelvanHarn}~VI(Proposition~2.1).
The additional  contribution here is the integral  representation of the L\'{e}vy density of $m(x\vert \mu)$ in the second part
of the theorem.

\begin{proof}[Proof of Theorem~\ref{thm:marginal}] 
Since $f(x\vert y)$ and $g(y\vert \mu)$ are infinitely divisible, by Theorem~\ref{thm:ID}, their  Laplace transforms take the form 
$e^{-y\eta(s)}$ and $e^{-\mu\psi(s)}$ respectively, where $\eta\,^\prime(s)$ and $\psi\,^\prime(s)$ are  completely monotone.
In turn, $e^{-\mu\psi(\eta(s))}$ is the  Laplace transform of $m(x\vert \mu)$ induced by~(\ref{eq:mix}).
Hence, by Proposition~\ref{prop:IDcomposition},  $m(x\vert \mu)$  is infinitely divisible.

By Corollary~\ref{cor:limit} combined with~(\ref{eq:mix}), the density $\rho(x)=x\, \xi(x)$, 
where $\xi(x)$  is the L\'{e}vy  density of $m(x\vert \mu)$, is given by the limit 
\begin{align}
\mu\,\rho(x) = \lim_{n\to\infty}\, n\, x \, m(x\vert \tfrac{\mu}{n}) 
&= 
x \int_0^\infty f(x\vert y)\,  \lim_{n\to\infty}n\, g(y\vert \tfrac{\mu}{n}) \, dy  \nonumber \\
&= \mu\, x \int_0^\infty f(x\vert y)\, \ell(y) \, dy 
\label{eq:Levylimit} \\
\implies\quad \frac{\rho(x)}{x} \equiv \xi(x) &=   \int_0^\infty f(x\vert y)\, \ell(y) \, dy 
\end{align}
where $\ell(y)=r(y)/y$ is the L\'{e}vy density of $g(y\vert \mu)$.
\label{proof:marginal}
\end{proof}

Theorem~\ref{thm:marginal} holds for any pair of infinitely divisible densities $(f,g)$.
We now turn to a particular choice of $(f,g)$. 

\subsection{Stable/Gamma Case}
Let $f(x\vert y)$ be the stable density $f_\alpha(x\vert y)$ for $0<\alpha<1$ 
and  $g(y\vert \mu)$ the gamma density 
to give $m_{\alpha}(x\vert \mu,\lambda)$:
\begin{align}
 m_{\alpha}(x\vert \mu,\lambda)  &= \frac{\lambda^\mu}{\Gamma(\mu)} \int_0^\infty f_\alpha(x\vert y) \, y^{\mu-1}e^{-\lambda y}\, dy 
\end{align}
With $\eta(s)=s^\alpha$, $\psi(s)=\log(1+s/\lambda)$, the commutative diagram  of $m_{\alpha}(x\vert \mu,\lambda)$ is:
\begin{equation}
\begin{tikzpicture}[auto,scale=2.0, baseline=(current  bounding  box.center)]
\newcommand*{\size}{\scriptsize}%
\newcommand*{\gap}{.2ex}%
\newcommand*{\width}{1.8}%
\newcommand*{\height}{1.0}%

\node (P) at (0,0)  {$m_{\alpha}(x\vert \mu,\lambda)$};
\node (Q) at ($(P)+(\width,0)$) {$\left(\dfrac{\lambda}{\lambda+s^\alpha}\right)^\mu$};
\node (B) at ($(P)-(0,\height)$) {$\mu\, \alpha E_\alpha(-\lambda x^\alpha)$}; 
\node (C) at ($(B)+(\width,0)$) {$\mu\, \dfrac{\alpha s^{\alpha-1}}{\lambda+s^\alpha}$};   
\draw[Myarrow] ([yshift =  \gap]P.east)  --  
([yshift = \gap]Q.west) ;
\draw[Myarrow]([xshift  =  \gap]Q.south) --  
([xshift =  \gap]C.north);
\draw[Myarrow] ([xshift =  \gap]P.south) -- 
([xshift =  \gap]B.north);
\draw[Myarrow] ([yshift = -\gap]C.west) --  
([yshift = -\gap]B.east); 
\end{tikzpicture}
\label{eq:MLCD}
\end{equation}
We recognise $s^{\alpha-1}/(\lambda+s^\alpha)$ as the Laplace transform of 
the Mittag-Leffler function $E_\alpha(-\lambda x^\alpha)$.
Hence the entry $\mu\,\alpha E_\alpha(-\lambda x^\alpha)$ in the bottom left corner, 
which is arrived at by following the path of the commutative diagram involving Laplace  transforms.
 The equivalent, direct path from top left  to bottom left corner is given by~(\ref{eq:marginalLevy}) in Theorem~\ref{thm:marginal},
 where $\ell(y)=y^{-1} e^{-\lambda y}$ is the L\'{e}vy density of the gamma distribution and, by inspection,
$\xi(x)=\alpha E_\alpha(-\lambda x^\alpha)/x$.
Hence~(\ref{eq:marginalLevy}) becomes
\begin{align*}
\alpha\,E_\alpha(-\lambda x^\alpha) &= x \int_0^\infty f_\alpha(x\vert y) \, y^{-1} e^{-\lambda y} \,dy
\end{align*} 
which is precisely the assertion of Proposition~\ref{prop:main} that we sought to justify by appeal to infinite divisibility.

\section{Conclusion }
\label{sec:conclusion}

Pollard proved the complete monotonicity of the Mittag-Leffler function 
 using methods of complex analysis. 
 He also  cited personal communication  by Feller that he had discovered 
a proof based on ``methods of probability theory''.
In his published work, Feller  derived the result using the the  two-dimensional Laplace transform of a
 bivariate distribution involving the stable distribution on a positive variable.
As published, both Pollard's and Feller's approaches are actually  analytic rather than probabilistic,
despite the stable distribution appearing in Feller's approach and in the Pollard result itself. 

In this paper, we  adopted Bayesian reasoning  as the fundamental probabilistic approach to the problem.
We assigned a prior distribution to the scale factor of the stable distribution.
In particular, we discussed the assignment of a gamma distribution.
The special case  of the  exponential prior distribution reproduced the Feller result with ease.

Importantly, we discovered a novel integral representation of the Mittag-Leffler distribution.
With the aid of  the polynomially tilted stable density, we proceeded to prove the complete monotonicity  of a generalised 
Mittag-Leffler function by establishing  that it is the Laplace transform of the generalised  Mittag-Leffler distribution,
thereby generalising the Pollard result.

 The novel integral representation  arises from choosing the 
L\'{e}vy measure of the infinitely divisible gamma distribution as a prior distribution.
Accordingly, we presented a discussion of  infinite divisibility on the positive half-line, which led  to the discovery.
In this context, we have found it helpful to invoke a commutative diagram representation of  infinite divisibility.

On a philosophical note, we have taken ``methods of probability theory'' to refer to Bayesian reasoning,
placing an accent on distributions and the sum and product rules of probability theory.
This is  by no means to dismiss an  alternative approach based on 
products and powers of specified  random variables rather than direct assignment of distributions.
 We trust that  the Bayesian view will  nonetheless  find appeal amongst both probabilists,  
 to whom random variables are often the staple,
 and physicists, who routinely take an analytic  view in the study of Mittag-Leffler functions without invoking 
 an underlying random experiment or phenomenon.

\bibliography{MittagLeffler}{}
\bibliographystyle{plain} 
\end{document}